\title{...}
\author{}
\newtheorem{thm}{Theorem}[section]
\newtheorem{Def}[thm]{Definition}
\newtheorem{ex}[thm]{Example}
\newcommand{\N}{\mathbb{N}}
\newcommand{\Z}{\mathbb{Z}}
\newcommand{\C}{\mathbb{C}}
\newcommand{\Q}{\mathbb{Q}}
\newcommand{\Br}{\operatorname{Br}}
\newcommand{\cha}{\operatorname{char}}
\newcommand{\Frac}{\operatorname{Frac}}
\newcommand{\Gal}{\operatorname{Gal}}
\newcommand{\ind}{\operatorname{ind}}
\newcommand{\Ind}{\operatorname{Ind}}
\newcommand{\lcm}{\operatorname{lcm}}
\newcommand{\Mat}{\operatorname{Mat}}
\newcommand{\GL}{\operatorname{GL}}
\newcommand{\per}{\operatorname{per}}
\newcommand{\tensorF}{\operatorname{\otimes_F}}
\newtheorem{lemma}[thm]{Lemma}
\begin{document}
\title{Embedding problems of division algebras}
\author{Annette Maier}
\date{\today}
\maketitle
\begin{abstract} A finite group $G$ is called admissible over a given field if there exists a central division algebra that contains a $G$-Galois field extension as a maximal subfield. We give a definition of embedding problems of division algebras that extends both the notion of embedding problems of fields as in classical Galois theory, and the question which finite groups are admissible over a field. In a recent work by Harbater, Hartmann and Krashen, all admissible groups over function fields of curves over complete discretely valued fields with algebraically closed residue field of characteristic zero have been characterized. We show that also certain embedding problems of division algebras over such a field can be solved for admissible groups.  
\end{abstract}

\section{Introduction}

A finite group $G$ is called \textit{admissible} over a field $F$ if there exists a central division algebra over $F$ that contains a $G$-Galois field extension as a maximal subfield. Equivalently, $G$ is admissible over $F$ if and only if there exists a $G$-crossed product algebra over $F$ that is a division algebra. The question, which groups are admissible over a given field $F$ is a variant of the inverse Galois problem and was first posed by Schacher in 1968 (\cite{Schacher}).\\
\\
Schacher proved that for $F=\Q$, any admissible group has metacyclic Sylow subgroups (\cite[Thm. 4.1]{Schacher}) and he conjectured that the converse also holds, i.e., that any group with metacyclic Sylow subgroups is admissible over $\Q$. The structure of such groups was described in \cite{ChiSo}. Schacher also proved that for any finite group $G$ there exists an algebraic number field $F$ such that $G$ is admissible over $F$. On the contrary, if $F$ is a global field of characteristic $p>0$ and $G$ is admissible over $F$, then every Sylow subgroup of $G$ except possibly the $p$-Sylow subgroup is metacyclic (\cite[Thm. 10.3]{Schacher}). The problem whether every finite group with metacyclic Sylow subgroups is admissible over $\Q$, was solved for solvable groups by Sonn in \cite{Sonn} but is in general still open. However, many groups with metacyclic Sylow subgroups have been shown to be admissible over $\Q$ and for most of them, even all algebraic number fields over which these groups are admissible have been described explicitly (see for example \cite{FV87}, \cite{FF90}, \cite{SS92}, \cite{Fei93}, \cite{Lid96}, \cite{LZ96}, \cite{Fei02}, \cite{Fei04}). There has also been some work on admissibility over $\Q(t)$ and $\Q((t))$, see for example \cite{FS95a} and \cite{FS95b}. \\
\\
For $F=\C((t))(x)$ and more generally for $F$ a function field in one variable over a complete discretely valued field with algebraically closed residue field, the following theorem was proven in \cite{HHK}:

\begin{thm}[Harbater-Hartmann-Krashen] \label{HHK}  
Let $F$ be a finitely generated field extension of transcendence degree one over a complete discretely valued field $K$ with algebraically closed residue field $k$ and let $G$ be a finite group of order not divisible by $\cha(k)$. Then $G$ is admissible over $F$ if and only if each of its Sylow subgroups is abelian and metacyclic, that is, abelian of rank at most $2$.
\end{thm}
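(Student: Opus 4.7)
The plan is to prove both implications using the geometry of a regular normal model $\mathcal{X}\to\mathrm{Spec}(\mathcal{O}_K)$ of $F$ and the patching and local--global machinery of Harbater--Hartmann--Krashen. For the necessary direction, suppose $D/F$ is a $G$-crossed product division algebra with $G$-Galois maximal subfield $L/F$. For a Sylow $p$-subgroup $P\le G$, the double centralizer theorem yields that $C_D(L^P)$ is a division algebra over $L^P$ of degree $|P|$ containing $L$ as a $P$-Galois maximal subfield. Since $L^P/F$ is finite and $K$ is complete, $L^P$ is again a function field of a curve over a complete DVR with algebraically closed residue field, so I reduce to showing that any admissible $p$-group over such a field must be abelian of rank at most two. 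For this I would combine the period--index bound $\ind\mid\per^2$ valid over $F$ with an analysis of the local Brauer invariants of $[D]$ along the irreducible components of the closed fiber and at closed points: the structure of Brauer groups of complete two-dimensional regular local rings with algebraically closed residue field forces any class of index $|P|$ to be split by a subfield generated by at most two Kummer elements, which in turn forces $P$ to be abelian of rank at most two.

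For the sufficient direction, assume every Sylow subgroup of $G$ is abelian of rank at most two. I would construct a $G$-crossed product division algebra by patching on a suitable model. For each prime $p\mid|G|$, pick a smooth closed point $P_p$ on the closed fiber at which to realize the Sylow subgroup $G_p\simeq\Z/p^a\times\Z/p^b$: the two local parameters at $P_p$ yield, by taking appropriate roots, a totally ramified $G_p$-Galois cover, and the associated tensor product of two cyclic algebras has index $|G_p|$ locally. The HHK patching theorems for Galois covers and for Brauer classes then assemble these local ingredients into a global $G$-Galois extension $L/F$ and a crossed-product algebra $D/F$, and a local--global principle for splitting of central simple algebras ensures that $D$ has global index $|G|$, hence is a division algebra.

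The main obstacle is coordinating the local data so that the patched extension has Galois group exactly $G$ (not a proper quotient) and so that the patched algebra has index exactly $|G|$. The rank-two abelian hypothesis on each Sylow subgroup is precisely what matches the two-dimensional local geometry at a smooth closed point of the closed fiber, allowing the two local parameters to independently generate the required cyclic Kummer factors; this matching breaks down as soon as a Sylow subgroup needs three generators or is nonabelian, which is also what drives the necessity direction.
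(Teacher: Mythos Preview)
Your outline matches the strategy the paper attributes to \cite{HHK}; note that the present paper does not itself prove Theorem~\ref{HHK} but only cites it and sketches the argument in the paragraph that follows. For the sufficient direction your plan coincides with that sketch: realize each Sylow $p$-subgroup locally at a smooth closed point of the closed fibre as a tensor product of two cyclic (Kummer) algebras in the two local parameters, patch these $P_i$-crossed products into a global $G$-crossed product, and use the local--global index formula (Theorem~\ref{lcm} here) to verify that the patched algebra is a division algebra. Your remark that the rank-two abelian hypothesis is exactly what matches the two local parameters at a smooth point is also the point the paper emphasizes.

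For the necessary direction your reduction to $p$-groups via the centralizer $C_D(L^P)$ is the standard Schacher move, and the idea of passing to a suitable completion where the index is preserved is correct. But the step you phrase as ``any class of index $|P|$ is split by a subfield generated by at most two Kummer elements, which in turn forces $P$ to be abelian of rank at most two'' hides the actual content. Knowing that the Brauer class is split by \emph{some} bicyclic Kummer extension says nothing a priori about the Galois group of the \emph{given} maximal subfield $L$; one must argue that $L$ itself (after completion) coincides with that bicyclic extension. The paper singles out the missing ingredient explicitly: $F$ contains all roots of unity of order prime to $\cha(k)$. Schacher's argument over $\Q$ only yields that each Sylow subgroup is \emph{metacyclic}; it is the presence of enough roots of unity in the base that upgrades metacyclic to abelian. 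Without isolating this point, your necessity argument is incomplete rather than wrong.
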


The forward direction was proven in a similar way as in Schacher's proof over $\Q$. The additional condition that the Sylow subgroups are abelian is due to the fact that $F$ contains all roots of unity of order prime to $\cha(k)$. The converse direction of Theorem \ref{HHK} was proven by using the method of patching over fields which was introduced in \cite{HH}. For each Sylow subgroup $P$, the authors construct a division algebra $D_P$ with a maximal subfield $E_P$ that is a $P$-Galois field extension (these are constructed over some field extensions of $F$) and then they show that these local building blocks can be patched into a division algebra over $F$ with a maximal subfield that is a $G$-Galois extension of $F$. We note that as in the case of algebraic number fields, not every $G$-Galois extension of $F$ has to be a maximal subfield of a division algebra if $G$ is admissible. For example $C_2\times C_2$ is admissible over $F$ by Theorem \ref{HHK}, but not every $C_2\times C_2$ extension of $F$ is a maximal subfield of a division algebra (see Example \ref{ex2}). In \cite{NefPar}, the authors use a similar strategy as in \cite{HHK} to prove that Theorem~\ref{HHK} is also true if $F$ is the fraction field of a complete local domain of dimension $2$ with separably closed residue field, e.g., $F=\C((X,Y))$. Patching over fields is also used in \cite{RS12}, where the authors give necessary conditions for a group to be admissible over function fields of curves over complete discretely valued fields with arbitrary residue field, e.g., $F=\Q_p(x)$. \\
\\
In the present paper, we introduce the notion of an embedding problem of division algebras and study embedding problems over fields $F$ as in Theorem~\ref{HHK}. In classical Galois theory, an embedding problem over a field $F$ is given by an exact sequence \[1 \to N \to G \to H \to 1 \] together with an $H$-Galois extension $E/F$. The embedding problem is called split if the exact sequence splits. A $G$-Galois extension $\tilde E/F$ with an \mbox{$H$-equivariant} embedding $E \subseteq \tilde E$ is called a proper solution to the embedding problem. Asking whether all embedding problems over $F$ have proper solutions is a stronger version of the inverse Galois problem over $F$ and is related to the structure of the absolute Galois group of $F$ by a theorem of Iwasawa, which asserts that if the absolute Galois group of $F$ is of countably infinite rank then it is free if and only if every embedding problem over $F$ has a proper solution (see \cite[Cor. 24.8.3]{FriedJarden}). For $F$ a field as in Theorem~\ref{HHK}, a result of Harbater and Pop (see Theorem~5.1.4 and Theorem~5.1.9 in the survey article \cite{Harb}) implies that every finite split embedding problem over $F$ has a proper solution. Now to define what an embedding problem of division algebras over a field $F$ is, we assume that we are given an exact sequence as above together with an $H$-Galois extension $E/F$ that is contained as a maximal subfield in a central division algebra $D$ over $F$. The concept should extend the notion of an embedding problem of fields, so a solution should consist of a pair $(\tilde D, \tilde E)$ with $\tilde D$ a central division algebra over $F$ and $\tilde E$ a maximal subfield of $D$ such that $\tilde E$ solves the embedding problem on the level of fields and such that $\tilde D$ relates to $D$ in a yet to be determined way. In our definition, we require that the $|N|$-th power of the Brauer class of $\tilde D$ equals the Brauer class of $D$. Asking whether such an embedding problem has a proper solution is stronger than asking whether $G$ is admissible. We go back to a field $F$ as in Theorem~\ref{HHK} and ask whether any embedding problem of division algebras over $F$ can be solved for an admissible group $G$ of order not divisible by $\cha(k)$. In Theorem~\ref{thmcoprime}, we give an affirmative answer in the case that $N$ and $H$ have coprime orders. The proof uses patching over fields and mimicks the construction in \cite{HHK}. For each Sylow subgroup $P$ of $N$, we use the building blocks $(D_P, E_P)$ constructed in \cite{HHK} and show that these can be patched together with the given pair $(D,E)$ in a way that yields a solution $(\tilde D, \tilde E)$ of the embedding problem of division algebras. Unfortunately, the method of patching does not seem to be applicable to the case where $N$ and $H$ have non-coprime orders, as will be explained in Section~\ref{disc}. \\
\\
The paper is organized as follows. In Section \ref{secebp}, we motivate our definition of an embedding problem of division algebras. Section \ref{secpat} provides some background on the method of patching over fields with a focus on patching central simple algebras. In Section \ref{seccoprime}, we prove that split embedding problems for admissible groups over fields $F$ as in Theorem \ref{HHK} can be solved under the assumption that the orders of kernel and cokernel in the exact sequence are coprime and not divisible by $\cha(k)$ and that further the given Galois extension is not ramified all along the closed fibre of a suitable regular model of $F$. Regarding the case where kernel and cokernel do not have coprime orders, we show that there exist proper solutions to the particular embedding problem given by an exact sequence of the form $1 \rightarrow C_n \to C_m\times C_n \to C_m \rightarrow 1$ with $n$ dividing some power of~$m$, in Section \ref{ncex}. In Section \ref{secdis}, we first discuss the use of patching for the non-coprime case and then conclude with giving an example of a solution of an embedding problem of fields that cannot be extended to a solution of an embedding problem of division algebras. 

\noindent \textbf{Acknowledgments.} I would like to thank David Harbater and Julia Hartmann for various helpful comments and discussions. 

\section{Embedding problems for division algebras}\label{secebp}
We first fix some notation. If $F$ is a field and $A$ is a central simple algebra, we denote its class in the Brauer group $\Br(F)$ by $[A]$. We write the group structure in $\Br(F)$ multiplicatively, i.e. $[A\otimes_F B]=[A]\cdot [B]$. All $F$-division algebras are assumed to be central over $F$.\\
\\
We now state our definition of an embedding problem of division algebras and then proceed with giving some reasons for this definition in Lemma \ref{converse}. The definition was suggested in this form by Harbater, Hartmann and Krashen. 
\begin{Def}\label{defebp}
Let $F$ be a field. A \textbf{(split) embedding problem $\mathcal{E}$ of division algebras} consists of a (split) exact sequence
$$1 \rightarrow N \rightarrow G \rightarrow ^{\! \! \! \! \!   f }  G/N \rightarrow 1$$ of finite groups together with a division algebra $D$ over $F$ such that $D$ contains a maximal subfield $E$ that is a Galois extension of $F$ with $\Gal(E/F) \cong G/N$. \\
A \textbf{proper solution} to $\mathcal{E}$ is a division algebra $\tilde D$ over $F$ that contains a maximal subfield $\tilde E$ which is a Galois extension of $F$ with $\Gal(\tilde E/F) \cong G$ such that the following holds: 
\begin{enumerate}
  \item $\tilde{E}$ is a proper solution to the embedding problem on the level of fields, i.e. $\tilde E \supseteq E$ and the diagram $$\xymatrix{\Gal(\tilde E/F) \ar[r]^{\operatorname{res}} \ar@{-}[d]_{\cong} & \Gal(E/F) \ar@{-}[d]^{\cong}\\
            G \ar[r]_{f} & G/N}$$ commutes.
 \item $[\tilde D]^{|N|}=[D]$ inside $\Br(F)$. 
\end{enumerate}
\end{Def}

\noindent The following lemma justifies Definition \ref{defebp} in so far that it provides a converse under the assumption that period and index are equal. 
\begin{lemma}\label{converse} Let $G$ be a finite group and let $F$ be a field such that $\per(D)=\ind(D)$ holds for all division algebras of index dividing $|G|$. Suppose we are given a division algebra $\tilde D$ over $F$ that contains a $G$-Galois extension $\tilde  E/F$ as a maximal subfield. For a normal subgroup $N \trianglelefteq G$, we then let $D$ be the division algebra in the Brauer class of $[\tilde D]^{|N|}$. Then $D$ contains the field of invariants $E:=\tilde E^N$ as a maximal subfield.  
\end{lemma}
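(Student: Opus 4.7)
The plan is to reduce the assertion to the classical criterion that a separable field extension $L/F$ of degree equal to $\ind(D)$ is isomorphic to a maximal subfield of $D$ if and only if $L$ splits $D$. Since $E=\tilde E^N$ is an intermediate subfield of the $G$-Galois extension $\tilde E/F$, it is automatically separable over $F$ with $[E:F]=[G:N]$, so I only need to verify two things: (i) $\ind(D)=[G:N]$ and (ii) $E$ splits $D$.

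For (i), I would start from the fact that $\tilde E$ is a maximal subfield of $\tilde D$, which gives $\ind(\tilde D)=[\tilde E:F]=|G|$ and implies that $\tilde E$ splits $\tilde D$. The period-index assumption applied to $\tilde D$ then yields $\per(\tilde D)=|G|$, so $\per(D)=\per([\tilde D]^{|N|})=|G|/\gcd(|G|,|N|)=[G:N]$, where I have used $|N|\mid |G|$. Since $\tilde E$ splits any power of $[\tilde D]$, it also splits $D$, and hence $\ind(D)\mid [\tilde E:F]=|G|$. The hypothesis therefore applies once more to $D$, upgrading the period computation to $\ind(D)=\per(D)=[G:N]$.

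For (ii), I would consider the base change $\tilde D\otimes_F E$ as a central simple algebra over $E$. Because $\tilde E$ splits $\tilde D$ over $F$, it still splits $\tilde D\otimes_F E$ over $E$, so the index of the latter divides $[\tilde E:E]=|N|$, and a fortiori its period divides $|N|$. Consequently $[D\otimes_F E]=[\tilde D\otimes_F E]^{|N|}$ is trivial in $\Br(E)$, i.e.\ $E$ splits $D$. Combining (i) and (ii) and invoking the embedding criterion finishes the proof.

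The argument is essentially bookkeeping with periods and indices, and the one conceptual point worth flagging as the main obstacle is the need to invoke the period-index hypothesis twice in step (i): once to pin down $\per(\tilde D)=|G|$ and once to convert the period computation $\per(D)=[G:N]$ into the index equality $\ind(D)=[G:N]$ required by the embedding criterion. Without this hypothesis the Brauer-class identity $[D]=[\tilde D]^{|N|}$ would only control $\per(D)$, not $\ind(D)$, and the conclusion that $[E:F]=\ind(D)$ could fail outright.
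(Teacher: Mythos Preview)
Your argument is correct and follows essentially the same route as the paper: reduce to showing $\ind(D)=[E:F]$ via the period--index hypothesis and that $E$ splits $D$ via the bound $\ind(\tilde D\otimes_F E)\mid |N|$, then invoke the maximal-subfield criterion. The only cosmetic difference is that the paper obtains the bound $\ind(\tilde D\otimes_F E)\mid |N|$ by restricting the $G$-crossed product $\tilde D$ to an $N$-crossed product over $E$ (citing Pierce \S14.7), whereas you get it more directly from the fact that $\tilde E$ is a degree-$|N|$ splitting field of $\tilde D\otimes_F E$; your version is slightly more elementary and you are also a bit more explicit about why the period--index hypothesis applies to $D$.
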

\begin{proof}
 We have $\ind(D)=\per(D)=\per(\tilde{D})/|N|=|G/N|=[E:F]$, so it suffices to show that $E$ is a splitting field of $D$ (see \cite[Cor. 13.3]{Pie}). As $\tilde D$ contains $E$ as a maximal subfield, it is a $G$-crossed product algebra over $F$. Hence $\tilde D \otimes_F E$ is Brauer equivalent to an $N$-crossed product algebra over $E$ (see Lemma a. in Section 14.7 of \cite{Pie}). Hence $\ind(\tilde D\otimes_F E)$ divides $|N|$, the degree of this crossed product. On the other hand, $|G|=\ind(\tilde D)$ divides $[E:F]\ind(\tilde D\otimes_F E)$ (see \cite[Prop.\,13.4\,(v)]{Pie}), so $|N|$ equals $\ind(\tilde D\otimes_F E)$. We conclude that $[D\otimes_F E]=[\tilde D \otimes_F E]^{|N|}$ is trivial in $\Br(F)$, so $E$ splits $D$. 
\end{proof}

The following theorem asserts that the assumption $\per(D)=\ind(D)$ holds in the situation we are interested in, i.e., over fields $F$ as in the following theorem and groups $G$ such that $\cha(k)$ doesn't divide $|G|$. 

\begin{thm}[{\cite[Thm. 6.3]{ML} or \cite[Thm. 5.5]{HHK1}}] \label{per=ind} 
Let $F$ be a field of transcendence degree one over a complete, discretely valued field $K$ with algebraically closed residue field $k$. Then $\per(\alpha)=\ind(\alpha)$ holds for elements in $\Br(F)$ of period not divisible by $\cha(k)$. 
\end{thm}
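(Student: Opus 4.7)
The plan is to reduce the statement to the case of prime-power period, then apply a local–global principle coming from patching to reduce to a finite list of local fields, and finally verify period–index equality on each local piece.

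First, I would reduce to the case $\per(\alpha) = \ell^n$ for a single prime $\ell \neq \cha(k)$. Writing $\alpha$ as the sum of its $\ell$-primary components in $\Br(F)$, and using that period and index are multiplicative over coprime primary decompositions, it is enough to handle each primary piece separately. Since $\per(\alpha) \mid \ind(\alpha)$ and the two invariants have the same set of prime divisors, the task then reduces to exhibiting a separable splitting extension of $F$ of degree at most $\ell^n$.

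Next I would pass to a regular model. By Lipman's resolution of singularities for excellent surfaces, there is a regular projective model $\mathcal{X} \to \operatorname{Spec}(\mathcal{O}_K)$ whose generic fibre has function field $F$ and whose reduced closed fibre $X$ has only normal crossings. To $\mathcal{X}$ one associates a finite collection of complete overfields $F_\xi$ of $F$: the fraction field $F_P$ of the complete local ring at each closed point $P$ of $X$, the field $F_U$ attached to each component $U$ of the smooth locus, and the field $F_\wp$ attached to each branch $\wp$ at a node. The patching framework of \cite{HH}, as refined in \cite{HHK1}, provides a local–global principle at the prime-to-$\cha(k)$ level: $\Br(F)$ injects into $\prod_\xi \Br(F_\xi)$, and étale $F$-algebras can be assembled from compatible collections of étale $F_\xi$-algebras. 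In particular, once compatible local splitting fields are chosen, they patch to a global splitting field of the expected degree.

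The heart of the proof is then the local period–index computation together with the compatibility of the local splitting fields. For each $F_\xi$ one uses that $\mu_{\ell^n} \subset k \subset F$ (since $k$ is algebraically closed and $\ell \neq \cha(k)$) together with the structure of the Brauer group of a complete two-dimensional regular local ring with algebraically closed residue field, computed via residue maps and tame symbols, to write $\alpha|_{F_\xi}$ as a sum of at most two symbols of order dividing $\ell^n$; this exhibits a cyclic or bicyclic Kummer splitting extension of $F_\xi$ of degree at most $\ell^n$. The main obstacle, and the technical heart of the argument, is to arrange these local Kummer extensions compatibly along the branches and to ensure that the patched étale $F$-algebra is a field rather than a nontrivial product. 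This is achieved by rescaling Kummer generators using the roots of unity in $F$ to match along each branch $\wp$, and then by using the connectedness of the closed fibre $X$ to deduce that the resulting étale $F$-algebra is connected. Once such a field $L/F$ of degree $\ell^n$ splitting $\alpha$ is produced, the inequalities $\per(\alpha) \leq \ind(\alpha) \leq [L:F] = \ell^n = \per(\alpha)$ force equality.
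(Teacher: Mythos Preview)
The paper does not prove this statement at all; Theorem~\ref{per=ind} is quoted without proof from \cite[Thm.~6.3]{ML} and \cite[Thm.~5.5]{HHK1}. There is therefore no proof in the paper to compare your proposal against.

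That said, your sketch is broadly in the spirit of the argument in \cite{HHK1}, but it is more elaborate than what is actually done there, and the extra elaboration is where the genuine difficulty in your outline lies. The proof in \cite{HHK1} does not construct a global splitting field by patching compatible local Kummer extensions. Instead, it proceeds via the index formula (stated in the present paper as Theorem~\ref{lcm}): for a central simple $F$-algebra $A$, one has $\ind(A)=\lcm_{\xi}\ind(A\otimes_F F_\xi)$, where $\xi$ runs over the closed points in $S$ and the components of $X\setminus S$. One then verifies $\per=\ind$ over each $F_\xi$ individually, using the known structure of the Brauer group of a two-dimensional complete regular local ring with algebraically closed residue field (for the $F_Q$) and of the field $F_U$. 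Since $\per(\alpha\otimes_F F_\xi)\mid \per(\alpha)$ for every $\xi$, the lcm of the local indices divides $\per(\alpha)$, whence $\ind(\alpha)\mid\per(\alpha)$ and equality follows.

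Your proposed route---building local Kummer splitting extensions, matching them along the branches, and then arguing that the patched \'etale algebra is a field---would in principle also yield the result, but the step you flag as ``the main obstacle'' (arranging compatibility along branches and ensuring connectedness of the patched algebra) is precisely what the index formula sidesteps. In particular, rescaling Kummer generators by roots of unity is not by itself enough to guarantee that independently chosen local splitting fields agree on the overlaps $F_\wp$, so as written that step is a genuine gap; the actual argument avoids this issue entirely by never constructing a global splitting field.
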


\section{Patching}\label{secpat}
We briefly introduce the method of patching over fields established in \cite{HH} with emphasis on patching central simple algebras. We refer the reader to Section 5 and 6 of \cite{HH} and to Section 4 of \cite{HHK} for more details on this particular patching setup.\\
\\
Let $T$ be a complete discrete valuation ring with uniformizer $t$, fraction field $K$ and algebraically closed residue field $k$. Let further $F$ be an algebraic function field over $K$. Then there exists a regular connected projective $T$-curve $\hat X$ with function field $F$ such that its closed fibre $X$ has regular irreducible components. Let $S \subset X$ be any nonempty finite set that contains all points at which distinct irreducible components of $X$ meet. By a branch of $X$ at a point $Q \in S$ we mean a pair $\wp=(U,Q)$ with $U$ a connected component of $X\backslash S$ such that $Q$ lies in its closure $\overline{U}$. We now define the set $\Xi$ as the collection of all points $Q \in S$, all connected components $U$ of $X\backslash S$ and all branches $\wp$ of $X$ at points $Q \in S$. We endow $\Xi$ with a partial order by setting $U\succ \wp$ and $Q \succ \wp$ for any branch $\wp=(U,Q)$ in $\Xi$. \\
\\
Following Section 6 of \cite{HH}, we define fields $F_\xi$ for each $\xi \in \Xi$. If $\xi=Q \in S$, then we let $R_Q$ be the local ring of $\hat X$ at $Q$, we write $\hat R_Q$ for its completion at the maximal ideal, and we let $F_Q$ be the fraction field of $\hat R_Q$. If $\xi=U$ is a connected component of $X\backslash S$, then we let $R_U\leq F$ be the ring of rational functions that are regular on $U$, we let $\hat R_U$ be its $t$-adic completion, and we let $F_U$ denote the fraction field of $\hat R_U$. A branch $\wp=(U,Q)$ corresponds to a height-one prime ideal of $\hat R_Q$ that contains $t$ and localizing $\hat R_Q$ at $\wp$ yields a discrete valuation ring $R_\wp$. We let $\hat R_\wp$ be the completion of $R_\wp$ and we let $F_\wp$ be its fraction field. Then for each branch $\wp=(U,Q)$ in $\Xi$, we have $F_Q, F_U \subseteq F_\wp$ and $F=F_Q \cap F_U$. In particular, $F_{\xi_1} \subseteq F_{\xi_2}$ holds if $\xi_1 \succ \xi_2$. \\
\\
A patching problem for the system $\{F_\xi \ | \ \xi \in \Xi \}$ is a system $\{ V_\xi \ | \ \xi \in \Xi \}$ of $F_\xi$-vector spaces $V_\xi$ such that for each $\xi_1 \succ \xi_2$, we have an $F_{\xi_1}$-linear map $\phi_{\xi_1,\xi_2}: V_{\xi_1} \rightarrow V_{\xi_2}$ that induces an isomorphism  $V_{\xi_1}\otimes_{F_{\xi_1}} F_{\xi_2} \rightarrow V_{\xi_2}$. By Theorem 6.4. in \cite{HH}, every patching problem over $\{F_\xi \ | \ \xi \in \Xi \}$ has a solution, that is, there exists a vector space $V$ over $F$ with $F_\xi$-isomorphisms $V\otimes_F {F_\xi} \cong V_\xi$ for all $\xi \in \Xi$ that are compatible with the maps $\phi_{\xi_1,\xi_2}$ (for all $\xi_1 \succ \xi_2$). This solution $V$ can be identified with the inverse limit $\varprojlim \limits_{\xi \in \Xi} V_{\xi}$ (\cite[Prop. 2.3]{HH}).\\
\\
By Theorem 7.2. in \cite{HH}, there also exist solutions to patching problems of central simple algebras and $G$-Galois algebras over $\{F_\xi \ | \ \xi \in \Xi \}$, where $G$ denotes a finite group. We now describe the common approach to construct a $G$-Galois field extension of $F$ via patching (we will work out the details in the proof of Theorem \ref{thmcoprime}): We choose subgroups $H_1,\dots H_r$ of $G$ that generate $G$, construct $H_i$-field extensions $L_i$ over $F_{Q_i}$ (for suitable points $Q_1,\dots,Q_r \in S$) that split over $F_\wp$ for each branch $\wp$ at $Q_i$ and let $E_{Q_i}=\Ind^G_{H_i}(L_i)$ be the induced $G$-Galois algebra over $F_{Q_i}$. (We refer the reader to Chapter 4.3 of \cite{JLY} for details on induced Galois algebras.) For each component $U$ of $X\backslash S$, we let $E_U$ be the split $G$-Galois algebra $E_U=F_U^{\oplus|G|}=\Ind^G_{1}(F_U)$ (i.e. $G$ acts via permuting the components) and similarly $E_\wp=\Ind^G_{1}(F_\wp)$ for each branch $\wp$ in $\Xi$. These $G$-Galois algebras define a patching problem and we denote the solution by $E$. The building blocks $L_i$ must be chosen in such a way that $E$ is in fact a field. Similarly, in order to construct a division algebra over $F$, we construct central simple algebras over $F_\xi$, $\xi \in \Xi$, that define a patching problem. In order to show that the solution is in fact a division algebra and not just a central simple algebra, the following theorem is useful:
\begin{thm}[{\cite[Thm. 5.1]{HHK1}}]\label{lcm} 
If $A$ is a central simple algebra over $F$, then its index is the least common multiple of $\ind(A\otimes_F F_Q)$ (all $Q \in S$) and all $\ind(A\otimes_F F_U)$, where $U$ ranges over the connected components of $X\backslash S$.
\end{thm}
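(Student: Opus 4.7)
The easy direction is that each $\ind(A\otimes_F F_Q)$ and $\ind(A\otimes_F F_U)$ divides $\ind(A)$, since the index cannot increase under scalar extension; hence their lcm divides $\ind(A)$. Write $n$ for this lcm and $d=\deg(A)$. The content of the theorem is the reverse bound $\ind(A)\mid n$.

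My plan is to reformulate $\ind(A)\mid n$ as an existence statement for a right $A$-module and then build such a module by patching. Recall that if $B$ is a central simple $F$-algebra of degree $d$ and index $i$, then $B\cong M_{d/i}(D)$ for a division algebra $D$ of degree $i$, so the simple right $B$-module has $F$-dimension $d\cdot i$ and the $F$-dimensions of all finite right $B$-modules are exactly the nonnegative multiples of $d\cdot i$. In particular, $\ind(B)\mid n$ if and only if $B$ admits a right module of $F$-dimension $n\cdot d$. It therefore suffices to produce a right $A$-module $V$ with $\dim_F V=nd$.

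To build $V$, I would patch local modules. For each $\xi\in\Xi$ one has $\ind(A\otimes_F F_\xi)\mid n$: for $\xi\in S$ or $\xi$ a component this holds by the definition of $n$, and for $\xi=\wp$ a branch it follows by further scalar extension from either endpoint. Choose right $A_\xi$-modules $V_\xi$ of $F_\xi$-dimension $nd$. On each branch $\wp=(U,Q)$, the modules $V_Q\otimes_{F_Q}F_\wp$ and $V_U\otimes_{F_U}F_\wp$ are right $A_\wp$-modules of equal $F_\wp$-dimension, and since $A_\wp$ has a unique simple right module up to isomorphism they are isomorphic; I would fix such isomorphisms and identify both with $V_\wp$. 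The underlying $F_\xi$-vector spaces then form a patching problem in the sense of Section~\ref{secpat}, which by Theorem~6.4 of \cite{HH} solves to an $F$-vector space $V$ of dimension $nd$.

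It remains to upgrade $V$ to a right $A$-module compatible with the local ones. I would view the $A_\xi$-actions as $F_\xi$-algebra homomorphisms $\rho_\xi: A_\xi\to\operatorname{End}_{F_\xi}(V_\xi)$ and patch them as $F_\xi$-linear maps, using the identification $\operatorname{End}_F(V)\otimes_F F_\xi\cong\operatorname{End}_{F_\xi}(V_\xi)$, to obtain an $F$-linear map $\rho:A\to\operatorname{End}_F(V)$. Multiplicativity $\rho(ab)=\rho(a)\rho(b)$ and unit preservation $\rho(1)=1$ are identities in $\operatorname{End}_F(V)$ that hold after base change to every $F_\xi$ by construction, and hence globally, since $V=\varprojlim_\xi V_\xi$ embeds into $\prod_\xi V_\xi$. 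This last step, confirming that the patched $F$-linear datum really is an algebra action (and not merely a linear map), is the main bookkeeping obstacle I expect; with it in hand, $V$ is a right $A$-module of $F$-dimension $nd$, so $\ind(A)\mid n$ and the theorem follows.
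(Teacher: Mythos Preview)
The paper does not give its own proof of this statement; it is quoted verbatim from \cite[Thm.~5.1]{HHK1} and used as a black box, so there is nothing in the present paper to compare your argument against. That said, your proposal is correct and is essentially the argument given in the cited reference: translate $\ind(A)\mid n$ into the existence of a right $A$-module of $F$-dimension $nd$, construct such a module over each $F_\xi$ using $\ind(A\otimes_F F_\xi)\mid n$, match them on branches (two $A_\wp$-modules of the same finite dimension are isomorphic), and patch.

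Two minor remarks. First, the two-stage procedure (patch the underlying vector spaces, then patch the action map $\rho$) works, but the ``bookkeeping obstacle'' you anticipate disappears if you patch directly in the category of finite right $A$-modules: since $A$ is a fixed finite-dimensional $F$-algebra, a compatible system of right $A_\xi$-modules is literally a patching problem of $F_\xi$-vector spaces together with compatible $A$-actions, and the inverse limit carries an $A$-action by functoriality. This is how \cite{HHK1} organises it. Second, your last justification is aimed at the wrong object: the identities $\rho(ab)=\rho(a)\rho(b)$ and $\rho(1)=1$ live in $\operatorname{End}_F(V)$, so what you need is the injectivity of $\operatorname{End}_F(V)\hookrightarrow \operatorname{End}_F(V)\otimes_F F_\xi\cong \operatorname{End}_{F_\xi}(V_\xi)$ (which holds for any single $\xi$ because $F\hookrightarrow F_\xi$), not the embedding $V\hookrightarrow\prod_\xi V_\xi$. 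This is cosmetic; the argument goes through.
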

If we would like to show that the division algebra $A$ we constructed in this way is also a solution to a given embedding problem, we moreover have to show that a certain power of $A$ is Brauer equivalent to a given division algebra. The following theorem asserts that it is sufficient to show that this holds locally. 

\begin{thm}[{\cite[Thm. 7.2]{HH}}]\label{Braueriso}  
The base change map $$\Br(F)\to \prod\Br(F_U) \times_{\prod \Br(F_\wp)} \prod \Br(F_Q)$$ from the Brauer group of $F$ to the fibre product of the Brauer groups of $\Br(F_U)$ ($U$ ranges over the components of $X\backslash S$) and $\Br(F_Q)$ ($Q \in S$) with respect to the maps $\Br(F_U)\rightarrow \Br(F_\wp)$ and $\Br(F_Q) \rightarrow \Br(F_\wp)$ for all branches $\wp=(U,Q)$ is a group isomorphism. 
\end{thm}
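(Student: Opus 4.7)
The plan is to establish the isomorphism by proving bijectivity, since the group-homomorphism property is automatic (base change commutes with tensor product, and Brauer class is defined via tensor product). Both directions rest on the patching of central simple algebras and the patching of modules coming out of the framework recalled above, together with two elementary facts: central simple algebras of equal degree over a field are isomorphic iff they lie in the same Brauer class, and a simple module over $M_n(k)$ is unique up to isomorphism and has $k$-dimension $n$.

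For surjectivity, start with a compatible tuple, i.e., classes $[A_U]\in \Br(F_U)$ and $[A_Q]\in \Br(F_Q)$ whose images in $\Br(F_\wp)$ agree for every branch $\wp$. Pick representatives and tensor with suitable matrix rings to arrange a common degree $n$ for all $A_\xi$. For each branch $\wp=(U,Q)$, the algebras $A_U\otimes_{F_U} F_\wp$ and $A_Q\otimes_{F_Q} F_\wp$ are then central simple of the same degree in the same Brauer class, so they are $F_\wp$-isomorphic. Choose such isomorphisms to produce a patching problem for central simple algebras, and apply the central-simple-algebra version of the patching theorem (the CSA variant of the vector-space patching in \cite[Thm.\,6.4]{HH}) to obtain a global CSA $A$ over $F$ with the prescribed base changes. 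Its Brauer class is the required preimage.

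For injectivity, suppose $[A]\in \Br(F)$ maps to the identity in the fibre product, so each $A\otimes_F F_\xi$ is split. Writing $n=\deg A$, each $A\otimes_F F_\xi$ admits a simple module $V_\xi$ of $F_\xi$-dimension $n$. Over each branch, $V_U\otimes_{F_U} F_\wp$ and $V_Q\otimes_{F_Q} F_\wp$ are simple modules over $A\otimes_F F_\wp$ of $F_\wp$-dimension $n$, hence isomorphic; fix such isomorphisms. Patching this module data (a patching problem for $A$-modules, itself an instance of the vector-space patching of \cite[Thm.\,6.4]{HH} enriched with the $A$-action) produces a global $A$-module $V$ of $F$-dimension $n$. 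Then $A\hookrightarrow \operatorname{End}_F(V)$ is an injection of $F$-algebras of equal dimension, so $A\cong M_n(F)$, and $[A]$ is trivial in $\Br(F)$.

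The main obstacle in both directions is arranging compatible local choices across branches. For surjectivity, enlarging the representatives by matrix factors of appropriate size converts Brauer equivalence over $F_\wp$ into an actual isomorphism, after which the argument is largely formal once CSA patching is granted. For injectivity, the uniqueness of simple modules over matrix algebras forces the local modules to agree up to isomorphism on branches; the deeper input is the module-patching principle for the field system $\{F_\xi \mid \xi \in \Xi\}$, which is the engine behind \cite[Thm.\,6.4]{HH} and which is really where the geometric hypotheses on the model $\hat X$ and the completions $\hat R_\xi$ enter.
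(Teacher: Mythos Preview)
Your surjectivity argument coincides with the paper's: both normalize degrees, upgrade Brauer equivalence on branches to genuine isomorphisms, and invoke CSA patching. For injectivity, however, the paper takes a different route. It fixes arbitrary splitting isomorphisms $\tilde\psi_Q\colon\Mat_n(F_Q)\to D\otimes_F F_Q$ and $\tilde\psi_U\colon\Mat_n(F_U)\to D\otimes_F F_U$, observes via Skolem--Noether that the discrepancy over each branch $\wp=(U,Q)$ is conjugation by some $C_\wp\in\GL_n(F_\wp)$, and then invokes the simultaneous factorization theorem \cite[Thm.~3.6]{HHK1} to write $C_\wp=C_Q C_U$; adjusting the $\tilde\psi$'s by these factors yields compatible splittings, whence $D\cong\Mat_n(F)$ by the uniqueness half of patching (\cite[Thm.~7.1]{HH}). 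Your approach instead patches the local simple modules and reads off a global $A$-module of dimension $n$, which immediately splits $A$. This is cleaner and more conceptual: it shows that injectivity is a formal consequence of $A$-module patching, which itself follows from the vector-space equivalence you already grant (one small point to make explicit is that you need the full equivalence of categories, not merely existence of solutions, so that the compatible local $A$-actions descend to $V$). The paper's route, by contrast, is chosen to make visible exactly which ingredient beyond \cite{HH} is required to pass from smooth to merely regular models---namely the simultaneous factorization from \cite{HHK1}---whereas in your argument that same ingredient is absorbed into the patching machinery cited as a black box.
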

This result is stated in \cite[Thm 7.2.]{HH} only under the stronger assumption that there exists a smooth model for $\hat X$. However, the proof of surjectivity only relies on the existence of solutions to patching problems, so the proof also works under the assumptions of this section. Using a result on simultaneous factorization of matrices that was later given in \cite{HHK1}, the proof of injectivity as given in \cite[Thm 7.2.]{HH} can also be transferred to our setup. We sketch the proof: Let $D$ be a division algebra over $F$ of degree $n$ that splits over all fields $F_Q$ and over all fields $F_U$. We have to show that $D$ is trivial, i.e., $D=F$. By Theorem 7.1. in \cite{HH}, it is sufficient to show that there are isomorphisms $\psi_Q:\Mat_n(F_Q)\to D\otimes_F F_Q$ for all $Q \in S$ and isomorphisms $\psi_U:\Mat_n(F_U)\to D\otimes_F F_U$ for all components $U$ of $X \backslash S$ such that for each branch $\wp=(U,Q)$ we have $$\psi_Q \otimes_{F_Q} F_\wp=\psi_U\otimes_{F_U}F_\wp \colon \Mat_n(F_\wp) \to D \tensorF F_\wp.$$ We start with arbitrary isomorphisms $\tilde \psi_Q:\Mat_n(F_Q)\to D\otimes_F F_Q$ (all $Q \in S$) and $\tilde \psi_U:\Mat_n(F_U)\to D\otimes_F F_U$ (all components $U$ of $X \backslash S$) and consider for each branch $\wp=(U,Q)$ the isomorphism $$(\tilde \psi_Q^{-1}\otimes_{F_Q}F_\wp) \circ (\tilde \psi_U \otimes_{F_U}F_\wp): \Mat_n(F_\wp)\to \Mat_n(F_\wp).$$ This has to be an inner automorphism, so it is given by conjugating with a matrix $C_{\wp} \in \GL_n(F_\wp)$. Now Theorem 3.6. in \cite{HHK1} implies that the matrices $C_\wp$ (where $\wp$ ranges over all branches) can be simultaneously factorized as follows. There exists matrices $C_Q$ (for each $Q \in S$) and $C_U~\in~\GL_n(F_U)$ (for each component $U$ of $X \backslash S$) such that for each branch $\wp=(U,Q)$, we have $C_\wp=C_QC_U$ with respect to the natural inclusions $F_Q \subseteq F_\wp$ and $F_U \subseteq F_\wp$. Defining $\psi_Q: \Mat_n(F_Q) \to D \tensorF F_Q$ as the conjugation with $C_Q$ followed by $\tilde \psi_Q$ and defining $\psi_U: \Mat_n(F_U) \to D \tensorF F_U$ as the conjugation with $C_U^{-1}$ followed by $\tilde \psi_U$ yields the desired result.

\section{The coprime case}\label{seccoprime}
In this section, we prove that embedding problems of division algebras can be solved for admissible groups over algebraic function fields $F/K$ with $K$ a complete discretely valued field with algebraically closed residue field $k$ and valuation ring $T\subset K$ under the following assumptions:
The orders of the kernel and cokernel in the exact sequence of the embedding problem are coprime and not divisible by $\cha(k)$ and for some regular connected projective $T$-curve $\hat X$ with function field $F$ and regular irreducible components of the closed fibre the following ramification assumption holds: Let $\hat Y$ denote the normalization of $\hat X$ inside the given Galois extension $E/F$. Then the cover $\hat Y \to \hat X$ is not ramified all along the closed fibre $X$ of $\hat X$. 
\begin{thm}\label{thmcoprime}
Let $F$ be a finitely generated field extension of transcendence degree one over a complete discretely valued field $K$ with algebraically closed residue field $k$. Let $\mathcal{E}$ be an embedding problem of division algebras such that the orders of the normal subgroup $N$ and of the factor group $G/N$ are coprime and not divisible by $\cha(k)$ and such that every Sylow subgroup of $G$ is abelian of rank at most $2$. Assume further that there exists a regular model $\hat X$ for $F$ such that the given $(G/N)$-extension $E/F$ is not ramified all along the closed fibre of $\hat X$. Then there exists a proper solution to $\mathcal{E}$.
\end{thm}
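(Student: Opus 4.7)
The plan is to mimic the patching proof of Harbater-Hartmann-Krashen in \cite{HHK} for the "new" normal subgroup $N$, letting the given pair $(D,E)$ handle the patches along the quotient $H:=G/N$. Fix the regular $T$-curve $\hat X$ supplied by the hypothesis, and let $\hat Y\to\hat X$ be the normalization of $\hat X$ in $E$; by assumption $\hat Y\to\hat X$ is unramified along some component of the closed fibre $X$. Enumerate the Sylow subgroups $P_1,\dots,P_r$ of $N$ corresponding to the distinct primes dividing $|N|$, pick $r$ distinct regular closed points $Q_1,\dots,Q_r$ of $X$ inside this unramified locus, and form a finite set $S\subset X$ containing them together with all singular points of $X$. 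For each $j$, the construction of \cite{HHK} provides a $P_j$-Galois field extension $L_j/F_{Q_j}$, as a maximal subfield of an $F_{Q_j}$-division algebra $D_j$ of degree $|P_j|$, set up so that the induced $G$-Galois algebra $\Ind^G_{P_j}(L_j)$ and its associated degree-$|G|$ crossed product both become trivial over $F_\wp$ for every branch $\wp$ at $Q_j$.

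Since $\gcd(|N|,|H|)=1$, choose $s\in\Z$ with $s|N|\equiv 1\pmod{|H|}$, and let $D'$ be the $F$-division algebra with $[D']=[D]^s$. Theorem~\ref{per=ind} gives $\ind(D')=|H|$, and since $E$ splits $[D]$ it splits $[D]^s$, so $E$ is also a maximal subfield of $D'$. Define the patching data as follows. At $\xi=Q_j$, using that $E\otimes_F F_{Q_j}$ is unramified while $L_j$ is totally ramified (hence the two are linearly disjoint), put $\tilde E_{Q_j}:=(E\otimes_F F_{Q_j})\otimes_{F_{Q_j}}\Ind^N_{P_j}(L_j)$ with the $G\cong N\rtimes H$-action obtained by combining the Galois actions on the two factors through the fixed Schur-Zassenhaus splitting; and let $\tilde A_{Q_j}$ be the $G$-crossed product of $\tilde E_{Q_j}$ of Brauer class $[D'\otimes_F F_{Q_j}]\cdot[D_j]$. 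At every other $\xi\in\Xi$, put $\tilde E_\xi:=(E\otimes_F F_\xi)\otimes_{F_\xi} F_\xi^{|N|}$ (with trivial $N$-action) and let $\tilde A_\xi$ be a $G$-crossed product of $\tilde E_\xi$ in the class $[D'\otimes_F F_\xi]$. The branch-splittings of the HHK blocks make the patching compatibilities at every $\wp$ hold automatically.

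Patching then yields global objects $\tilde E/F$ and $\tilde A/F$ with $\tilde E\subseteq\tilde A$ inherited from the local embeddings. To verify the proper-solution conditions, note that by Theorem~\ref{lcm}, $\ind(\tilde A)=\lcm_\xi\ind(\tilde A\otimes F_\xi)$, which at $Q_j$ equals $\ind(D'_{Q_j})\cdot|P_j|$ (the two Brauer classes having coprime orders) and elsewhere equals $\ind(D'_\xi)$; applying Theorem~\ref{lcm} to $D'$ itself, the total lcm equals $\lcm(|H|,|N|)=|G|$, so $\tilde A=:\tilde D$ is a division algebra. The standard connectedness argument of \cite{HHK} shows $\tilde E$ is a field, since each $L_j$ is a field, $E$ is a field, and the subgroups $P_j$ together with $H$ generate $G$. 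The identification $\tilde E^N=E$ with the prescribed $f$-equivariance is built into the construction of each $\tilde E_\xi$. Finally, Theorem~\ref{Braueriso} reduces $[\tilde D]^{|N|}=[D]$ to a local check: at $Q_j$, $[\tilde A_{Q_j}]^{|N|}=[D'_{Q_j}]^{|N|}\cdot[D_j]^{|N|}=[D\otimes_F F_{Q_j}]\cdot 1$, since $[D_j]$ has order dividing $|P_j|$ so $[D_j]^{|N|}=1$, and $[D']^{|N|}=[D]^{s|N|}=[D]$ (the period of $[D]$ divides $|H|$ and $s|N|\equiv 1\pmod{|H|}$); elsewhere the same identity holds without the $D_j$ factor. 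The main obstacle is the correct local definition of each $\tilde E_{Q_j}$ with its twisted $G$-action so that $\tilde E_{Q_j}^N=E\otimes_F F_{Q_j}$ holds as $H$-Galois algebras; the coprime hypothesis, the Schur-Zassenhaus splitting, and the ramification dichotomy between $E$ and $L_j$ at $Q_j$ are exactly what make this construction possible.
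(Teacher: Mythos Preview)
Your overall strategy---patch HHK's $P_j$-blocks at chosen points $Q_j$ against $(D',E)$-blocks elsewhere, then check indices via Theorem~\ref{lcm} and Brauer classes via Theorem~\ref{Braueriso}---is exactly the paper's approach, and the final verification in Step~5 is essentially identical.

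There is, however, a genuine gap at the points $Q_j$. You treat $E\otimes_F F_{Q_j}$ as an unramified \emph{field} extension, linearly disjoint from $L_j$, and then build $\tilde E_{Q_j}$ as a tensor product carrying ``the $G\cong N\rtimes H$-action obtained by combining the Galois actions on the two factors''. For a nontrivial semidirect product this action is not well-defined: given an $H$-Galois algebra $A$ and an $N$-Galois algebra $B$, there is in general no $G$-Galois structure on $A\otimes B$ unless $G=N\times H$, because the conjugation action of $H$ on $N$ has no reason to lift to an action of $H$ on $B$. The same problem infects your $\tilde E_\xi=(E\otimes_F F_\xi)\otimes F_\xi^{|N|}$ at the other patches (and ``trivial $N$-action'' cannot be literally correct). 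The paper bypasses this entirely with the observation you are missing: since $k$ is algebraically closed, an unramified point of $\hat Y\to\hat X$ is in fact a \emph{completely split} point, so by Hensel's Lemma $E\otimes_F F_{Q_j}\cong\Ind^H_1(F_{Q_j})$. Once $E$ splits over $F_{Q_j}$ one can simply take $\tilde E_{Q_j}=\Ind^G_{P_j}(L_j)$ and $A_{Q_j}=\Mat_{n_j}(D_j)$; your extra factor $[D'\otimes_F F_{Q_j}]$ is then automatically trivial. At the remaining $\xi$ the correct object is $\Ind^G_H(E\otimes_F F_\xi)$, which carries a canonical $G$-action for arbitrary $G$. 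With these replacements your argument goes through and coincides with the paper's.
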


\begin{proof} 
Let $\mathcal{E}$ be such an embedding problem of division algebras, i.e., we are given an exact sequence of finite groups $1 \rightarrow N \rightarrow G \rightarrow G/N \rightarrow 1$ together with a $G/N$-Galois field extension $E/F$ that is a maximal subfield of a division algebra $D$ over $F$. Note that since $|N|$ and $|G/N|$ are coprime, the exact sequence $1 \rightarrow N \rightarrow G \rightarrow G/N \rightarrow 1$ splits by the theorem of Schur-Zassenhaus. We fix a subgroup $H\leq G$ that is mapped isomorphically to $G/N$. \\ 
\\
We choose a regular model $\hat X$ for $F$ with closed fibre $X$ such that $E/F$ is not ramified all along $X$. Suppose that $|N|$ has $r$ distinct prime divisors $p_1, \dots, p_r$. We can choose distinct closed points $Q_1,\dots, Q_r \in X$ at which $X$ is regular and which are unramified in $E/F$. By Hensel's Lemma, $E$ then splits at $F_{Q_i}$, i.e., $E\tensorF F_{Q_i}\cong F_{Q_i}^{|H|}\cong \Ind^H_1(F_{Q_i})$. We let $S'$ be the finite set of points at which distinct irreducible components of $X$ meet and we set $S=S' \cup \{Q_1,\dots,Q_r\}$ (note that this is a disjoint union). We then define $\Xi$ and $F_\xi$ ($\xi \in \Xi$) as in Section \ref{secpat} and abbreviate $F_i:=F_{Q_i}$ for $1 \leq i \leq r$. \\
\\
For each $1\leq i \leq r$, we fix a $p_i$-Sylow subgroup $P_i$ of $N$. By assumption, each $P_i$ is abelian of rank at most $2$. Following the proof of Proposition 4.4 in \cite{HHK}, we choose a division algebra $D_i$ over $F_i$ for each $i\leq r$ that contains a maximal subfield $L_i$ which is a $P_i$-Galois extension of $F_i$ such that $L_i\otimes_{F_i} F_{\wp_i} \cong \Ind^{P_i}_1(F_{\wp_i})$ for each $i\leq r$, where $\wp_i$ denotes the (unique) branch of $X$ at $Q_i$.\\
For each $\xi \in \Xi$ we define a $G$-Galois algebra $\tilde E_\xi$ over $F_\xi$:
\begin{eqnarray*}
\xi=Q_i \ (1\leq i \leq r): & \tilde E_{Q_i}&:=\Ind^G_{P_i}(L_i) \\
\text{any other } \xi:  & \tilde{E}_\xi&:=\Ind^G_{H}(E\otimes_F F_\xi). 
\end{eqnarray*}
We set $h=|H|$, $n=|N|=[G\!:\! H]$ and $n_i=[G\!:\!P_i]$ for $i\leq r$. Since $n$ and $h$ are coprime, there is an integer $1 \leq r < h$ such that $rn\equiv 1 \mod h$. Let $D'$ be the division algebra in the Brauer class of $D^{\otimes r}$. Since $E$ is a maximal subfield of $D$ with $[E:F]=h$, we have $\ind(D)=h$ and thus $\ind(D')=\ind(D^{\otimes r})=\ind(D)=h$ (see \cite[Thm. 5.5c)]{Sal}). \\
For each $\xi \in \Xi$, we now define a central simple algebra $A_\xi$ of degree $|G|$ over $F_\xi$: \vspace{-0.3cm}
\begin{eqnarray*}
\xi=Q_i \ (1\leq i \leq r): & A_{Q_i}&:=\Mat_{n_i}(D_i) \\
\text{any other } \xi:  & A_\xi&:=\Mat_{n}(D'\otimes_F F_\xi).
\end{eqnarray*}
The proof now proceeds in five steps and works similarly to the proof of Lemma 4.2 in \cite{HHK}.\\
\\
\textit{Step 1:} We first embed $\tilde{E}_\xi$ into $A_\xi$ for all $\xi \in \Xi$. If $\xi=Q_i$ for some $i \leq r$, $\tilde{E}_\xi=\tilde E_{Q_i}$ is the direct sum of $n_i$ copies of $L_i$ which is a subfield of $D_i$, hence $\tilde{E}_\xi$ embeds diagonally into $\Mat_{n_i}(D_i)=A_\xi$. For any other $\xi \in \Xi$, note first that $E$ splits $D^{\otimes r}$, since $E$ is a maximal subfield of $D$ and thus splits $D$. As $D'$ is equivalent to $D^{\otimes r}$ in $\Br(F)$, $E$ is also a splitting field of $D'$, with $[E:F]=h=\deg(D')$. Thus $E$ is isomorphic to a maximal subfield of $D'$ (see \cite[Cor. 13.3]{Pie}). We conclude that $E\otimes_F F_\xi$ embeds into $D' \otimes_F F_\xi$ and $\tilde{E}_\xi$ thus embeds diagonally into $A_\xi$. We obtain $F_\xi$-algebra embeddings $\iota_\xi:\tilde{E}_\xi \hookrightarrow A_\xi$ for all $\xi \in \Xi$. \\
\\
\textit{Step 2:} We now use patching to obtain a commutative $G$-Galois algebra $\tilde E$. We need to show that $\tilde E_Q\otimes_{F_Q} F_\wp \cong \tilde E_\wp \cong\tilde E_U\otimes_{F_U} F_\wp$ for each branch $\wp=(U,Q)$ contained in $\Xi$. Let first $\wp=\wp_i=(U_i,Q_i)$ for some $i\leq r$ ($U_i$ denotes the unique component of $X$ such that $Q_i \in \overline{U_i}$). 
Then $$\tilde{E}_{Q_i} \otimes_{F_i}F_{\wp_i} \cong \Ind^{G}_{P_i}(L_i \otimes_{F_i}F_{\wp_i})\cong \Ind^{G}_{P_i}(\Ind^{P_i}_1(F_{\wp_i})) \cong {\Ind^G_1(F_{\wp_i})}.$$ On the other hand, we assumed that $F_i$ splits $E$, hence so does $F_{\wp_i}\supset F_i$. We conclude $\tilde E_{\wp_i}\cong \Ind^G_{H}(E\otimes_F F_{\wp_i})\cong\Ind^G_{H}(\Ind^H_1(F_{\wp_i}))\cong {\Ind^G_1(F_{\wp_i})}$ and also
$$\tilde{E}_{U_i} \otimes_{F_{U_i}}F_{\wp_i} \cong \Ind^{G}_{H}(E \otimes_{F} F_{\wp_i}) \cong {\Ind^G_1(F_{\wp_i})}.$$ For any other branch $\wp=(U,Q)$, 
$$\tilde{E}_Q \otimes_{F_Q}F_\wp \cong \Ind^{G}_{H}(E \otimes_{F} F_\wp) \cong \tilde{E}_U \otimes_{F_U}F_\wp$$ holds by definition. Note that all isomorphisms are compatible with the action of $G$ and we thus obtain isomorphisms of $G$-Galois $F_\wp$-algebras \\ $\phi_\wp: \tilde{E}_Q \otimes_{F_Q}F_\wp \rightarrow  \tilde{E}_U \otimes_{F_U}F_\wp$ for all branches $\wp=(U,Q)$ contained in $\Xi$. Hence $\{\tilde E_\xi \ | \ \xi \in \Xi \}$ defines a patching problem of $G$-Galois algebras over $\{F_\xi \ | \ \xi \in \Xi \}$ and we can now apply Theorem 7.1 (together with Theorem 6.4) of $\cite{HH}$ to obtain a solution, i.e., a $G$-Galois $F$-algebra $\tilde E$ with isomorphisms $\tilde E \otimes _F F_\xi \cong \tilde{E}_\xi$ for all $\xi \in \Xi$ that are compatible with the isomorphisms $\phi_\wp$. Thus $\tilde E$ is a commutative $G$-Galois algebra of dimension $|G|$ over $F$.\\
\\
\textit{Step 3:} We use patching to obtain a central simple algebra $A$ with $\tilde E \leq  A$. We first show that $A_Q\otimes_{F_Q}F_\wp \cong A_\wp \cong A_U\otimes_{F_U} F_\wp $ holds for all branches $\wp=(U,Q)$ in $\Xi$. Let first $\wp=\wp_i=(U_i,Q_i)$ for some $i \leq r$. Note that $L_i$ embeds into $F_{\wp_i}$, since $F_{\wp_i}$ splits $L_i$. As $L_i$ is a splitting field of $D_i$, the same is true for $F_{\wp_i}$ and we conclude  
$$A_{Q_i} \otimes_{F_i} F_{\wp_i} \cong \Mat_{n_i}(D_i \otimes_{F_i} F_{\wp_i})\cong \Mat_{|G|}(F_{\wp_i}).$$ 
On the other hand, $E$ embeds into $F_{\wp_i}$ (here we use again the assumption that $F_i\subset F_{\wp_i}$ splits $E$) and it is a splitting field of $D'$, so $F_{\wp_i}$ also splits $D'$: $A_{\wp_i}\cong \Mat_{n}(D' \otimes_{F} F_{\wp_i})\cong \Mat_{|G|}(F_{\wp_i})$ and similarly
$$A_{U_i} \otimes_{F_{U_i}} F_{\wp_i} \cong \Mat_{n}(D' \otimes_{F} F_{\wp_i})\cong \Mat_{|G|}(F_{\wp_i}).$$ 
For any other branch $\wp=(U,Q)$, 
$$A_Q\otimes_{F_Q} F_\wp \cong \Mat_{n}(D' \otimes_{F} F_\wp) \cong A_U \otimes_{F_U} F_\wp $$ holds by definition. 
Thus there exist $F_\wp$-algebra isomorphisms \\ $\tilde \psi_\wp: A_Q \otimes_{F_Q}F_\wp \rightarrow  A_U \otimes_{F_U}F_\wp$ for each branch $\wp=(U,Q)$ contained in $\Xi$. The same argument as in the proof of Proposition 4.2 in \cite{HHK} (involving the Skolem-Noether theorem) yields $F_\wp$-algebra isomorphisms $\psi_\wp: A_Q \otimes_{F_Q}F_\wp \rightarrow  A_U \otimes_{F_U}F_\wp$ for all branches $\wp=(U,Q)$ such that the following diagram commutes (and is compatible with the embedding $\tilde E_\wp \to A_\wp$): 
$$\xymatrix{A_Q \otimes_{F_Q}F_\wp \ar[rr]^{\psi_\wp}& & A_U \otimes_{F_U}F_\wp\\
            \tilde{E}_Q \otimes_{F_Q}F_\wp \ar@{_{(}->}[u]^{\iota_Q \otimes_{F_Q}F_\wp} \ar[rr]_{\phi_\wp}& & \tilde{E}_U  \ar@{_{(}->}[u]_{\iota_U \otimes_{F_U}F_\wp} \otimes_{F_U}F_\wp}$$ By Theorem 7.1 of \cite{HH} and the commutativity of the above diagram, we may patch the algebras $A_\xi$ to a central simple $F$-algebra $A$ of degree $|G|$ containing $\tilde E$ such that $A \otimes _F F_\xi \cong A_\xi$ holds for all $\xi \in \Xi$ (and these isomorphisms are compatible with the homomorphisms $\psi_\wp$). \\
\\
\textit{Step 4:} We show that $A$ is in fact a division algebra, i.e. $\ind(A)=|G|$ holds. First, note that for all $i \leq r$ 
$$|P_i|=[L_i:F_i]=\deg(D_i)=\ind(A_{Q_i})=\ind(A \otimes_F F_i) \ | \ \ind(A)$$
and thus $|N|=\lcm(|P_i|, i\leq r)$ divides $\ind(A)$. As $|N|$ and $|H|$ are coprime, we have $|G|=|N|\cdot|H|=\lcm(|H|,|N|)$ and it suffices to show that $|H|=\ind(D')$ divides $\ind(A)$. Let $\mathcal{U}$ be the collection of irreducible components of $X\backslash S$. Then Theorem \ref{lcm} yields $$\ind(D')=\lcm(\ind(D' \tensorF F_Q), \ind(D' \tensorF F_U) \ | \ Q \in S, \ U \in \mathcal U).$$ For $i\leq r$, $E\subset F_i$ is a splitting field of $D'$, hence $\ind(D' \tensorF F_i)=1$ and we conclude 
\begin{eqnarray*}
 |H|=\ind(D')&=&\lcm(\ind(D' \tensorF F_Q), \ind(D' \tensorF F_U) \ | \ Q \in S', U \in \mathcal{U}) \\ 
         &=&\lcm(\ind(A_Q), \ind(A_U) \ | \ Q \in S', U \in \mathcal{U} ) \\
         &=&\lcm(\ind(A\tensorF F_Q), \ind(A \tensorF F_U) \ | \ Q \in S', U \in \mathcal{U}) \ | \ \ind(A).
\end{eqnarray*}
Thus $\tilde D:=A$ is a division algebra and it follows immediately that $\tilde E$ is a maximal subfield with Galois group $G$. \\
\\
It remains to show that the pair $(\tilde D, \tilde E)$ solves the given embedding problem. To see that $E$ embeds into $\tilde E$ which is an inverse limit over $\tilde E \tensorF F_\xi$ ($\xi \in \Xi$), it suffices to show that $E$ embeds into all $\tilde E \tensorF F_\xi$. If $\xi=Q_i$, we use the identity $\Ind^G_H(L)^N \cong\Ind^{G/N}_{H/N\cap H}(L^{N\cap H})$ for induced algebras (see Remark (2) of Chapter 4.3. in \cite{JLY}) and compute 
\begin{eqnarray*}E\otimes_F F_i&\cong&\Ind^H_1(F_i) \cong \Ind^{G/N}_1(L_i^{P_i})\cong \Ind^{G/N}_1((\Ind^N_{P_i}(L_i))^N) \\
 &\cong& (\Ind^{G}_N(\Ind^N_{P_i}(L_i))^N \cong \tilde E_{Q_i}^N \cong \tilde E^N \tensorF F_i \hookrightarrow \tilde E \tensorF F_i
\end{eqnarray*}
 where all maps are $H$-equivariant. Thus we can embed $E$ into $\tilde E \tensorF F_i$ via an $H$-equivariant map. 
For any other $\xi \in \Xi$, we can embed $E$ into $E\tensorF F_\xi$ which embeds diagonally into $\Ind^G_H(E \tensorF F_\xi) \cong \tilde E \tensorF F_\xi$. Both of these embeddings are $H$-equivariant.
Altogether we obtain an $H$-equivariant inclusion $E \leq \tilde E$. \\
We conclude that the following diagram commutes:
$$\xymatrix{\Gal(\tilde E/F) \ar[r] \ar@{-}[d]_{\cong} & \Gal(E/F) \ar@{-}[d]^{\cong}\\
            G \ar[r]_f & G/N}$$

\noindent \textit{Step 5 :} We finally have to show that $[\tilde D]^n=[D]$ holds in $\Br(F)$. We use Theorem \ref{Braueriso} and show that equality holds for the images of $[\tilde D]^n$ and $[D]$ in the fibre product. If $Q=Q_i$ for an $i \leq r$, we have already seen that $F_i$ contains $E$ and thus splits $D$. Thus $[D\tensorF F_i]$ is trivial and $[\tilde D \tensorF F_i]^n=[A_{Q_i}]^n=[D_i]^n$ is trivial too, since $\per(D_i) \ | \ \ind(D_i)=|P_i| \ | \ n$. Hence $[\tilde D \tensorF F_{Q_i}]^n=[D\tensorF F_{Q_i}]$ for all $1\leq i \leq r$. If $\xi$ is either a point contained in $S'$ or one of the components of $X\backslash S$, we have 
\begin{eqnarray*}
[\tilde D \tensorF F_\xi]^n&=&[A_\xi]^n=[D'\tensorF F_\xi]^n=[D^{\otimes r} \tensorF F_\xi]^n \\
&=&[D \tensorF F_\xi]^{rn}=[D \tensorF F_\xi],
\end{eqnarray*}
 where the last equality follows from the fact that the period of $D \tensorF F_\xi$ divides $\ind(D)=h$ and $rn \equiv 1 \mod h$.
\end{proof}

\section{A non-coprime example}\label{ncex}
In this section, we let $F$ again denote a finitely generated field extension of transcendence degree one over a complete discretely valued field $K$ with algebraically closed residue field $k$. Let further $n,m \in \N$ be coprime to $\cha(k)$ and such that every prime factor of $n$ also divides $m$. For example, we could choose $n$ and $m$ both prime powers with respect to the same prime.\\
Now let $C_n$ and $C_m$ be cyclic groups of order $n$ and $m$ and consider the split exact sequence
$$1 \rightarrow C_n \longrightarrow \! \! \! \! \! \! \! \!   ^{\iota_2} \ \ C_m\times C_n \longrightarrow \! \! \! \! \! \! \! \! \! \!  ^{\pi_1} \ \ C_m \rightarrow 1,$$ where $\iota_2$ and $\pi_1$ denote the canonical inclusion and projection. Assume we are given a division algebra $D$ over $F$ containing a maximal subfield $E$ with $\Gal(E/F) \cong C_m$. We can solve this particular embedding problem without using patching. 

\subsection{Writing $D$ as a symbol algebra}
First note that since $\cha(k)$ doesn't divide $nm$ and since $k$ is algebraically closed, $K\subseteq F$ contains a primitive $nm$-th root of unity $\zeta$, by Hensel's Lemma. We set $\zeta_m=\zeta^n \in F$, a primitive $m$-th root of unity. \\
As $\Gal(E/F)\cong C_m$, Kummer theory asserts that there exists an $a \in F$ such that $E=F(^m \! \! \! \sqrt a)$. Now $E$ is a splitting field of $D$ and so $D$ is Brauer equivalent to a symbol algebra $(a,b,m,F,\zeta_m)$, for some $b \in F^\times$ (see \cite[\S 11, Lemma 1]{Draxl}). Recall that the central simple algebra $(a,b,m,F,\zeta_m)$ is generated by two elements $X$ and $Y$ such that $X^m=a$, $Y^m=b$ and $XY=\zeta_mYX$. As both $D$ and $(a,b,m,F,\zeta_m)$ are of degree $m$, they are actually isomorphic. We denote the Brauer class of $(a,b,m,F,\zeta_m)$ by $[a,b,m,F,\zeta_m]$. 


\subsection{Construction of $\tilde D$ such that $[\tilde D]^n=[D]$}
We set $\tilde D:=(a,b,nm,F,\zeta)$, which is a central-simple algebra of degree $nm$ over $F$ (see \cite[\S 11, Thm.1]{Draxl}). We can now use Lemma $6$ in \cite[\S 11]{Draxl} to obtain $$[\tilde D]^n=[a,b,nm,F,\zeta]^n=[a,b,m,F,\zeta^n]=[a,b,m,F,\zeta_m]=[D].$$ By Theorem \ref{per=ind}, $[D]$ has order $\ind(D)=m$ in $\Br(F)$. Taking into account that every prime factor of $n$ also divides $m$, we deduce that $[\tilde D]$ has order $nm$ in $\Br(F)$. This means that $\per(\tilde D)=nm$, hence $\ind(\tilde D)=nm=\deg(\tilde D)$ and so $\tilde D$ is a division algebra with $[\tilde D]^n=[D]$.\\
\\
It remains to show that $\tilde D$ contains a maximal subfield $\tilde E$ which is a solution to the corresponding embedding problem of fields. 

\subsection{Construction of a suitable maximal subfield $\tilde E \subseteq \tilde D$}
Let $\tilde X, \tilde Y$ be generators of $\tilde D$ such that $\tilde X^{nm}=a$, $\tilde Y^{nm}=b$ and $\tilde X \tilde Y=\zeta \tilde Y \tilde X$. Then $E=F(^m \! \! \! \sqrt a)\cong F(\tilde X^n)$ is a (commutative) subfield of $\tilde D$. We set $\tilde E:=F(\tilde X^n, \tilde Y^m)\supseteq E$. This is also a (commutative) subfield of $\tilde D$, since $\tilde X^n \cdot \tilde Y^m=(\zeta^n)^m\tilde Y^m \tilde X^n=\tilde Y^m \tilde X^n$. Hence $\tilde E\cong F(^m \! \! \! \sqrt a, ^n \! \! \! \sqrt b )$ and we have to show that $\tilde E$ is Galois with group $C_m \times C_n$ over $F$. It suffices to show $[\tilde E:F]=nm$. Now $\tilde E$ is a splitting field of $\tilde D$ (see \cite[\S 11, Lemma 9]{Draxl}), so $nm=\ind(\tilde D)$ divides $[\tilde E:F]$. Therefore, $\tilde E$ is Galois with Galois group $C_m \times C_n$ over $F$. This isomorphism is given by 
\begin{eqnarray*}
\Gal(\tilde E/F)=\Gal(F(^m \! \! \! \sqrt a, ^n \! \! \! \sqrt b )/F) &\longrightarrow & \Gal(F(^m \! \! \! \sqrt a)/F) \times \Gal(F(^n \! \! \! \sqrt b)/F) \\ 
\sigma &\mapsto& (\sigma|_{F(^m \! \! \! \sqrt a)}, \sigma|_{F(^n \! \! \! \sqrt b)})
\end{eqnarray*} and we conclude that the following diagram commutes:
$$\xymatrix{\Gal(\tilde E/F) \ar[r]^{\operatorname{res}} \ar@{-}[d]_{\cong} & \Gal(E/F) \ar@{-}[d]^{\cong}\\
            C_m\times C_n \ar[r]_{\pi_1} & C_m}$$
So $\tilde E$ is in fact a solution to the corresponding embedding problem of fields.

\section{Discussion}\label{secdis} 
In this section, we first explain the difficulties that arise when one wants to use patching to solve embedding problems of division algebras in the non-coprime case.\\ In the second part of this section, we give a negative answer to a related question concerning whether solutions of embedding problems of fields can be extended to solutions of embedding problems of division algebras.  
\subsection{The non-coprime case} \label{disc}
Let $F$ be as in Theorem \ref{thmcoprime}. Let $\mathcal{E}$ be a split embedding problem given by an exact sequence $1\rightarrow N \rightarrow G \rightarrow G/N \rightarrow 1$ (with $G$ admissible and $|G|$ not divisible by $\cha(k)$) together with a division algebra $D$ containing a $(G/N)$-Galois extension $E/F$ as a maximal subfield. If $|N|$ and $|G/N|$ are not coprime, the method of patching can in general not be used to construct solutions: Assume we have fixed a regular model $(\hat X, S)$ of $F$ with corresponding set $\Xi$ as in Section \ref{secpat}. We would like to construct central simple algebras $A_\xi$ over $F_\xi$ (for all $\xi \in \Xi$) of degree $|G|$ that can be patched together to a division algebra $A$ which contains a solution $\tilde E$ of the embedding problem on the level of fields such that $[A]^{|N|}=[D]$ holds. In particular, $[A_U]^{|N|}=[A\otimes_F F_U]^{|N|}=[D\otimes_F F_U]$ has to hold for each component $U$ of $X\backslash S$. By Corollary 5.11 in \cite{HHK1}, $\ind=\per$ holds for $A_U$ and $D\otimes_F F_U$. We conclude that $\ind(A_U)=\ind(D\tensorF F_U)\cdot\gcd(|N|, \ind(A_U))$, and is thus divided by $\ind(D\tensorF F_U)\cdot\gcd(|N|, \ind(D \tensorF F_U))$. The index $|G/N|$ of the given division algebra $D$ ``usually'' doesn't get significantly smaller when tensoring with $F_U$ (see Example \ref{exnoncoprime} below). Unless we are working in a very special situation with models $(\hat X,S)$ such that the index of $D\otimes _F F_U$ decreases significantly (i.e., most of the primes that divide $\ind(D)$ don't divide $\ind(D\otimes_F F_U)$) for all components $U$ of $X\backslash S$, we would have to construct building blocks $A_U$ over $F_U$ of index not much smaller than $|G/N|\cdot \gcd(|N|, |G/N|)$. That is, patching just leaves us with pretty much the same problem over $F_U$ instead of $F$. 

\begin{ex}\label{exnoncoprime}
Let now $F=\C((t))(x)$ and fix a primitive sixth root of unity $\zeta_6 \in \C$. Consider an exact sequence of the form 
$$1 \rightarrow S_3 \rightarrow S_3\rtimes C_6 \rightarrow C_6 \rightarrow 1$$ such that $G=S_3\rtimes C_6$ is admissible, i.e., $C_6$ acts on $S_3$ in a way that $G$ contains $C_2\times C_2$ and $C_3\times C_3$. Together with the symbol algebra $D=(x,t,6,F,\zeta_6)$ with maximal subfield $E=F(^6{\! \! }\sqrt x)$, this defines an embedding problem of division algebras over $F$. In fact, even $D\otimes_F \C(x)((t))$ is a division algebra, which can be seen using Example 2.7. and 4.4. in \cite{Wad} (with valuation $v: \C(x)((t))^{\times} \rightarrow \Z \times \Z$ with $\Z \times \Z$ ordered lexicographically and $v_1(f)$ the $t$-adic valuation of $f \in F$ and $v_2(f)$ the $x$-adic valuation of the lowest coefficient of $f$ in its $t$-adic expansion; $n:=6$, $a:=t$ and $b:=x$). \\
\\
Now set $\hat X=\mathbb{P}^1_{\C[[t]]}$ and let $S$ be any finite subset of $X=\mathbb{P}^1_\C$. Then $\Xi$ consists of the points in $S$, the one component $U:=X\backslash S$ of $X\backslash S$ and branches $(U,Q)$ for each $Q \in S$. Then $F_U \subset F_\emptyset=\C(x)((t))$, hence $\ind(D\otimes_F F_U)\geq \ind(D\otimes_F \C(x)((t)))=6$, as seen above. In order to use patching we would have to construct a central simple algebra $A_U$ over $F_U$ of degree $36$ with $\ind(D\otimes_F F_U)\cdot \gcd(6, \ind(D\otimes_F F_U))=6\cdot 6=36$ dividing $\ind(A_U)$, hence $\ind(A_U)=36=\deg(A_U)$. In other words, $A_U$ would have to be a solution of the embedding problem over $F_U$ given by $1 \rightarrow S_3 \rightarrow S_3\rtimes C_6 \rightarrow C_6 \rightarrow 1$ with given division algebra $D\otimes_F F_U$ and maximal subfield $E\otimes_F F_U$. Patching with respect to any model of the form $(\mathbb{P}^1_{\C[[t]]},S)$ thus doesn't allow to break up the problem into smaller pieces. 
\end{ex}

\subsection{A related question}
One might also ask the following: Given a split embedding problem of division algebras $\mathcal{E}$ together with a Galois extension $\tilde E/F$ that solves the embedding problem on the level of fields, does there exist a division algebra over $F$ containing $\tilde E$ and solving the embedding problem of division algebras? This turns out to be wrong, as the following example demonstrates. 
\begin{ex} \label{ex2}
Let $F=\C((t))(x)$ and $\tilde E=F(\sqrt{1+tx}, \sqrt{1+tx^{-1}})$. Then $\tilde E$ is a $C_2\times C_2$-extension of $F$, but it is not a maximal subfield of a division algebra. Indeed, assume there exists a division algebra $\tilde D$ of degree $4$ containing $\tilde E$. We set $\hat X=\mathbb{P}^1_{\C[[t]]}$ and we let $Q$ be the point in the closed fibre $X$ given by $x=t=0$. We further set $S=\{Q\}$ and $U=X\backslash S$. Theorem~\ref{lcm} yields 
$$4=\ind(\tilde D)=\lcm(\ind(\tilde D\otimes_F F_Q), \ind(\tilde D\tensorF F_U)),$$ so at least one of $\tilde D\tensorF F_Q$ and $\tilde D \tensorF F_U$ has index $4$ and is thus a division algebra. We conclude that at least one of the commutative algebras $\tilde E \tensorF F_Q$ and $\tilde E \tensorF F_U$ is a field. Now $(1+tx)$ is a square in $\C[x][[t]] \subset \C[[x,t]] \subset \Frac(\C[[x,t]])=F_Q$, so $\tilde E \tensorF F_Q$ is not a field. Similarly, $(1+tx^{-1})$ is a square in $\C[x^{-1}][[t]] \subset  \Frac(\C[x^{-1}][[t]])=F_U$, so $\tilde E\tensorF F_U$ is not a field, either, a contradiction. \\
\\
On the other hand, $\tilde E$ is a solution to the embedding problem of fields given by the exact sequence
$$1\to C_2 \to C_2\times C_2 \to C_2 \to 1$$ and given field $E=F(\sqrt{1+tx})$. Furthermore, $E$ is contained as a maximal subfield in the symbol algebra $(1+tx,x,2,F,-1)$ of degree $2$, which is a division algebra (by \cite[\S 11, Cor.4]{Draxl} it is sufficient to check that $x$ is not a norm of $E/F$). So $(D,E)$ together with the exact sequence define an embedding problem of division algebras that can be solved (see Section \ref{ncex}) but there exists no solution $(\tilde D, \tilde E)$ with $\tilde E$ the given solution of the embedding problem of fields. 
\end{ex}

\textbf{Author information:}\\ Annette Maier, Lehrstuhl f\"ur Mathematik (Algebra), RWTH Aachen University, 52056 Aachen. Email: annette.maier@mathA.rwth-aachen.de
\end{document}